\newtheorem{theorem}{Theorem}[section]
\newtheorem{lemma}[theorem]{Lemma}
\newtheorem{corollary}[theorem]{Corollary}
\theoremstyle{definition}
\newtheorem{remark}{Remark}
\begin{document}

\title[Best versus uniform approximation]{On the discrepancy between best and uniform approximation}

\author{Johannes Schleischitz}
                            
\thanks{Supported by the Schr\"odinger Scholarship J 3824 of the Austrian Science Fund (FWF).\\
University of Ottawa, Department of Mathematics and Statistics, King Edward 585, ON K1N 6N5 \\
	johannes.schleischitz@univie.ac.at}

\begin{abstract}
For $\zeta$ a transcendental real number, we consider the classical Diophantine exponents $w_{n}(\zeta)$
and $\widehat{w}_{n}(\zeta)$. They
measure how small $\vert P(\zeta)\vert$ can be for an integer polynomial $P$
of degree at most $n$ and naive height bounded by $X$, for arbitrarily large and all large $X$, respectively.
The discrepancy between the
exponents $w_{n}(\zeta)$ and $\widehat{w}_{n}(\zeta)$ has attracted interest recently. Studying parametric geometry
of numbers, W. Schmidt and L. Summerer were the first to refine the trivial inequality $w_{n}(\zeta)\geq \widehat{w}_{n}(\zeta)$.
Y. Bugeaud and the author found another estimation 
provided that the condition $w_{n}(\zeta)>w_{n-1}(\zeta)$ holds.
In this paper we establish an unconditional version of the latter result, which can be regarded as a proper extension. 
Unfortunately, the new contribution involves an additional exponent and is of interest only in certain cases.   
\end{abstract}

\maketitle

{\footnotesize{Math subject classification: 11J13, 11J25, 11J82 \\
key words: Diophantine inequalities, exponents of Diophantine approximation, U-numbers}}

\section{Introduction} \label{sek1}

Let $n$ be a positive integer and $\zeta$ be a transcendental real number. 
For a polynomial $P$ as usual let $H(P)$ denote its height, 
which is the maximum modulus among the coefficients of $P$. 
We want to investigate relations between the two classical exponents of Diophantine approximation
$w_{n}(\zeta)$ and $\widehat{w}_{n}(\zeta)$ introduced below.
Define $w_{n}(\zeta)$ as the supremum 
of $w\in{\mathbb{R}}$ such that
\begin{equation}  \label{eq:w}
H(P) \leq X, \qquad  0<\vert P(\zeta)\vert \leq X^{-w},  
\end{equation}
has a solution $P\in\mathbb{Z}[T]$ of degree at most $n$ for arbitrarily large values of $X$.
Similarly, let $\widehat{w}_{n}(\zeta)$ be the supremum of $w$ such that \eqref{eq:w}
has a solution $P\in\mathbb{Z}[T]$ of degree at most $n$ for {\em all} large $X$.
The interest of the exponent in \eqref{eq:w} and the derived exponents
arises partly from the relation to approximation to a real number
by algebraic numbers of bounded degree. Indeed, when $\alpha$ is an algebraic number very close to $\zeta$, 
then the evaluation $P_{\alpha}(\zeta)$ is also very small by absolute value, for $P_{\alpha}$ the irreducible 
minimal polynomial of $\alpha$ over $\mathbb{Z}$.
More precisely $\vert P_{\alpha}(\zeta)\vert \leq C(n,\zeta) H(P_{\alpha})\vert \zeta-\alpha\vert$ for a constant 
that depends only on $\zeta$ and the degree $n$ of $\alpha$.
The converse is a delicate problem, at least for certain 
numbers $\zeta$, related to the famous problem
of Wirsing posed in~\cite{wirsing}.
We do not further discuss this issue here and only affirm that results involving the exponents $w_{n}, \widehat{w}_{n}$
typically imply comparable results concerning approximation by algebraic numbers in an obvious way.
For any real number $\zeta$, our exponents clearly 
satisfy the relations
\begin{equation} \label{eq:wmonos}
w_{1}(\zeta)\leq w_{2}(\zeta)\leq \cdots, 
\qquad \widehat{w}_{1}(\zeta)\leq \widehat{w}_{2}(\zeta)\leq \cdots. 
\end{equation}
Dirichlet's box principle further implies
\begin{equation} \label{eq:wmono}
w_{n}(\zeta)\geq \widehat{w}_{n}(\zeta)\geq n.
\end{equation}
The value $w_{n}(\zeta)$ can be infinity. In this case $\zeta$ is called a $U$-number, more precisely
$\zeta$ is called $U_{n}$-number if $n$ is the smallest such index. 
The existence of $U_{n}$-numbers for any $n\geq 1$ was first proved by LeVeque~\cite{leveque}. 
On the other hand,
the quantities $\widehat{w}_{n}(\zeta)$ can be effectively bounded.
For $n=1$, it is not hard to see that we always 
have $\widehat{w}_{1}(\zeta)=1$, see~\cite{khin}.
For $n=2$, Davenport and Schmidt~\cite{davsh} showed 
\begin{equation} \label{eq:najo}
\widehat{w}_{2}(\zeta)\leq \frac{3+\sqrt{5}}{2}= 2.6180\ldots.
\end{equation}
Roy~\cite{royyy} proved that for certain numbers 
he called {\em extremal numbers} there is equality, so \eqref{eq:najo} is sharp. For an overview of the results
on the values $\widehat{w}_{2}(\zeta)$ attained for real $\zeta$, see~\cite[Section~2.4]{bdraft}. 
For $n\geq 3$, little is known about the 
exponents $\widehat{w}_{n}(\zeta)$. 
The supremum of the values $\widehat{w}_{n}(\zeta)$ over all real real numbers $\zeta$ remains unknown in this case,
in fact even the existence of a real number $\zeta$ 
with the property $\widehat{w}_{n}(\zeta)>n$
is open. The first result in this direction was due to Davenport and Schmidt~\cite{davsh},
who showed $\widehat{w}_{n}(\zeta)\leq 2n-1$ for any real $\zeta$.
Recently this bound has been refined in \cite{buschl} and further
in \cite{9},  
in the latter paper the currently best known bound
\begin{equation} \label{eq:buschlei}
\widehat{w}_{n}(\zeta)\leq \frac{3(n-1)+\sqrt{n^{2}-2n+5}}{2}
\end{equation}
was established.
The right hand side is of order $2n-2+o(1)$ as $n\to\infty$.   Conditionally on a conjecture of Schmidt and Summerer~\cite{sums},
small improvements of \eqref{eq:buschlei} can be obtained
with the method in~\cite{buschl}. In particular
for $n\geq 10$ it would imply $\widehat{w}_{n}(\zeta)\leq 2n-2$, see~\cite[Theorem~3.1]{iich}.

\section{The discrepancy between $w_{n}(\zeta)$ and $\widehat{w}_{n}(\zeta)$}

\subsection{The main result}
Investigating parametric geometry of numbers introduced by them,
Schmidt and Summerer~\cite{ssch} found the estimate 
\[
w_{n}(\zeta)\geq \frac{(n-1)\widehat{w}_{n}(\zeta)(\widehat{w}_{n}(\zeta)-1)}{1+(n-2)\widehat{w}_{n}(\zeta)},
\]
for the minimum discrepancy 
between $w_{n}(\zeta)$ and $\widehat{w}_{n}(\zeta)$, for any transcendental real $\zeta$. Rearrangements lead 
to the equivalent formulation
\begin{equation} \label{eq:lsws}
\widehat{w}_{n}(\zeta)\leq 
\frac{1}{2}\left(1+\frac{n-2}{n-1}w_{n}(\zeta)\right)+
\sqrt{\frac{1}{4}\left(\frac{n-2}{n-1}w_{n}(\zeta)+1\right)^{2}+\frac{w_{n}(\zeta)}{n-1}}.
\end{equation}
In fact analogous estimates were established in the more general context of linear forms with respect to any given
real vector $(\zeta_{1},\ldots,\zeta_{n})$ that is $\mathbb{Q}$-linearly independent together with $\{1\}$,
and in this case they are sharp for any dimension and parameter, see Roy~\cite{roy}.  
The above estimates yield a proper 
improvement of the obvious left inequality in \eqref{eq:wmono}
unless $w_{n}(\zeta)=\widehat{w}_{n}(\zeta)=n$. A non-trivial identity case occurs for $n=2$ and
$\zeta$ an extremal number as defined in Section~\ref{sek1}.
See also \cite{sums} for an improvement of \eqref{eq:lsws} when $n=3$, and a conjecture
concerning the optimal bound for arbitrary $n$.
A special case of the recent~\cite[Theorem~2.2]{buschl} complements \eqref{eq:lsws}.

\begin{theorem}    [Bugeaud, Schleischitz] \label{bugsch}
Let $n\geq 2$ be integers and $\zeta$ be a transcendental real number. Then in case of
\begin{equation} \label{eq:brack}
w_{n}(\zeta)> w_{n-1}(\zeta),
\end{equation}
we have
\begin{equation} \label{eq:kamel} 
\widehat{w}_{n}(\zeta)\leq \frac{nw_{n}(\zeta)}{w_{n}(\zeta)-n+1}.
\end{equation} 
\end{theorem}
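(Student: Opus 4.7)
The plan is to exploit the hypothesis~\eqref{eq:brack} to produce, for arbitrarily large $X$, an \emph{irreducible} integer polynomial $P$ of degree exactly $n$ realizing the approximation $|P(\zeta)|\leq X^{-w}$ for $w$ close to $w_n(\zeta)$, and then pair $P$ with a polynomial $Q$ furnished by the uniform exponent via a Sylvester--Bezout / resultant identity, or equivalently via a Liouville inequality at the root of $P$ closest to $\zeta$.

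Fix $\epsilon>0$ and $w$ with $w_{n-1}(\zeta)+\epsilon<w<w_n(\zeta)$, possible by~\eqref{eq:brack}. The definition of $w_n(\zeta)$ supplies, for arbitrarily large $X$, a polynomial $P\in\mathbb{Z}[T]$ of degree $\leq n$ with $H(P)\leq X$ and $|P(\zeta)|\leq X^{-w}$. Factor $P$ into irreducibles over $\mathbb{Z}$: Gel'fond's inequality bounds $\prod H(P_i)\leq C(n)H(P)$, while $\prod|P_i(\zeta)|=|P(\zeta)|/|c|$ where $c$ is the integer content. Hypothesis~\eqref{eq:brack} prevents any factor of degree $\leq n-1$ from, by itself, achieving the exponent $w$ for arbitrarily large heights. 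A counting argument therefore isolates an irreducible factor $P^{\ast}$ of degree exactly $n$ with $H(P^{\ast})\asymp X$ and $|P^{\ast}(\zeta)|\leq X^{-w+O(\epsilon)}$. Replace $P$ by $P^{\ast}$.

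Introduce a parameter $\beta\in(0,1)$ and put $Y=X^\beta$. By the definition of $\widehat{w}_n(\zeta)$, for $X$ large there exists $Q\in\mathbb{Z}[T]$ of degree $\leq n$ with $H(Q)\leq Y$ and $|Q(\zeta)|\leq Y^{-\widehat{w}_n(\zeta)+\epsilon}$. Since $P$ is irreducible of degree $n$ and $H(Q)<H(P)$, no integer multiple of $P$ equals $Q$, so $\gcd(P,Q)=1$ and $|\mathrm{Res}(P,Q)|\geq 1$. Writing $\mathrm{Res}(P,Q)=a_n^{\deg Q}\prod_{i=1}^{n}Q(\alpha_i)$ where $a_n$ is the leading coefficient of $P$ and $\alpha=\alpha_1$ is its root closest to $\zeta$, and estimating the factors $Q(\alpha_i)$ for $i\geq 2$ trivially in terms of $H(Q)$ and the other roots, yields a Liouville-type lower bound
\[
|Q(\alpha)|\gg Y^{1-n}X^{-\kappa},
\]
where $\kappa$ reflects the sizes of $a_n$ and of the conjugate roots. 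The upper bound $|Q(\alpha)|\leq|Q(\zeta)|+|\zeta-\alpha|\cdot c(\zeta,n)H(Q)$, together with $|\zeta-\alpha|\ll|P(\zeta)|/|P'(\alpha)|$, produces an inequality involving $X$, $Y$, $w$, $\widehat{w}_n(\zeta)$. Optimizing $\beta$ under the coprimality constraint $\beta<1$ and sending $\epsilon\to 0$, $w\to w_n(\zeta)^-$ should yield~\eqref{eq:kamel}.

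The principal difficulty lies in controlling the auxiliary exponent $\kappa$: a crude invocation of the Cauchy bound $|\alpha_i|\leq c H(P)$ leads to a weakened form of the inequality, and the optimal choice of $\beta$ may violate $\beta<1$, breaking coprimality. Here the hypothesis~\eqref{eq:brack} re-enters in its full strength: it guarantees that $\alpha$ is genuinely of degree $n$, so that the Liouville bound involves the exact degree $n$ rather than any lower degree, which is exactly what produces the factor $n$ (as opposed to $n-1$) in the numerator of~\eqref{eq:kamel}. In the regime where a single $Q$ with $\beta<1$ does not suffice, an iterated argument combining several polynomials $Q_i$ at dyadic scales, together with a refined height estimate on the Bezout coefficients (tracking precisely which columns of the Sylvester matrix are replaced), will be required to complete the proof and reach the exact form~\eqref{eq:kamel}.
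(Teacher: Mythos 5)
Your opening moves are the right ones and match the paper: hypothesis~\eqref{eq:brack} plus Gel'fond/Wirsing multiplicativity of heights lets you extract an \emph{irreducible} $P$ of degree exactly $n$ realizing the exponent $w_n(\zeta)$ up to $\epsilon$, and you then want to pair $P$ with a polynomial $Q$ produced by the uniform exponent and exploit coprimality via the resultant. The paper does exactly this, packaging the resultant/Sylvester estimate once and for all in Lemma~\ref{kastner} (the generalization of Davenport--Schmidt's Lemma~8). Your instinct that the fix to the auxiliary exponent $\kappa$ is to track ``precisely which columns of the Sylvester matrix are replaced'' rather than evaluate $Q$ at the conjugate roots of $P$ is also correct; that is essentially how Lemma~\ref{kastner} achieves the clean bound $\max\{|P(\zeta)|,|Q(\zeta)|\}\gg H(P)^{-n+1}H(Q)^{-m+1}\min\{H(P)^{-1},H(Q)^{-1}\}$ with no stray $\kappa$.

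The genuine gap is your constraint $\beta<1$, i.e.\ $H(Q)<H(P)$, which you impose to secure coprimality. With that restriction, the optimization only ever recovers the Davenport--Schmidt bound $\widehat{w}_n(\zeta)\leq 2n-1$; it cannot reach $nw_n(\zeta)/(w_n(\zeta)-n+1)$, which drops toward $n$ as $w_n(\zeta)\to\infty$. The correct scale is $Y=H(P)^{\theta}$ with $\theta=(w_n(\zeta)-n+1)/n$, and in the only interesting range $w_n(\zeta)>2n-1$ one has $\theta>1$, so $Y>H(P)$ is \emph{forced}. The paper handles this not by any dyadic iteration but by a simple dichotomy: either $Q$ is coprime to $P$ (in which case Lemma~\ref{kastner} plus $|P(\zeta)|\leq H(P)^{-w_n(\zeta)+\epsilon}$ and the hypothesis $w_n(\zeta)>2n-1$ forces $|Q(\zeta)|\geq|P(\zeta)|$, giving the bound directly, or else forces $H(Q)>H(P)^{\theta-\delta}$, a contradiction); or $Q$ is an integer multiple of the irreducible $P$, in which case trivially $|Q(\zeta)|\geq|P(\zeta)|$ and the same bound follows. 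Your proposal does not handle the case $H(Q)\geq H(P)$, and your proposed remedy (iterating over several $Q_i$ at dyadic scales) is not what is needed; the coprime-or-multiple dichotomy closes the argument cleanly. As it stands the proposal stops short of the theorem, which you yourself flag.
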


Observe that in contrast to \eqref{eq:lsws}, the bound in \eqref{eq:kamel} for $\widehat{w}_{n}(\zeta)$ decreases
as $w_{n}(\zeta)$ increases. 
For $n=2$ and $\zeta$ any Sturmian continued fraction, see~\cite{buglau} for a definition, 
there is equality in \eqref{eq:kamel}. This can be verified by inserting the 
exact values of $w_{2}(\zeta)$ and $\widehat{w}_{2}(\zeta)$
determined in the main result of~\cite{buglau}. In particular, for extremal numbers mentioned above
we have equality in both \eqref{eq:lsws} and \eqref{eq:kamel} when $n=2$.

The condition \eqref{eq:brack} in Theorem~\ref{bugsch} was used predominately to guarantee
that the polynomials in the definition of $w_{n}$ have degree precisely $n$ (in the
special case of ~\cite[Theorem~2.2]{buschl} reproduced 
in Theorem~\ref{bugsch}). In other words,
for any $\epsilon>0$, there are arbitrarily large irreducible integer polynomials 
$P$ of degree exactly $n$ for which the estimate
\begin{equation} \label{eq:frisch}
\vert P(\zeta)\vert \leq H(P)^{-w_{n}(\zeta)+\epsilon}
\end{equation}
holds. This was a crucial observation for the proof.
We point out that this property does not hold in general, i.e. when we drop the condition \eqref{eq:brack}. 
Indeed, using continued fraction expansion, one can even construct real numbers for which
the degree of any $P$ which satisfies \eqref{eq:frisch} equals one, when $\epsilon$ is sufficiently small.
This can be deduced from the proof of~\cite[Corollary~1]{bug2010}. It is unknown whether \eqref{eq:kamel} still holds
when we drop the condition \eqref{eq:brack}.
The purpose of
this paper is to provide a weaker but unconditioned relation.
 We will agree on $w_{0}(\zeta)=0$ in our following main result. 

\begin{theorem} \label{nn}
Let $n$ be a positive integer and $\zeta$ be a real transcendental number. Let $l\in\{1,2,\ldots,n\}$ be the smallest integer such 
that $w_{l}(\zeta)= w_{n}(\zeta)$.
Then the estimation
\begin{equation} \label{eq:lhsrhs}
\widehat{w}_{n}(\zeta)\leq \min \left\{n+l-1, 
\frac{nw_{n}(\zeta)}{w_{n}(\zeta)-l+1}+w_{n-l}(\zeta)\cdot \left(1-\frac{n}{w_{n}(\zeta)-l+1}\right)\right\}
\end{equation}
holds.
\end{theorem}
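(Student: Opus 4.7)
Write $w = w_n(\zeta)$, $\hat w = \widehat w_n(\zeta)$, and $v = w_{n - l}(\zeta)$. Fix $\epsilon > 0$ small, to be sent to $0$. By the minimality of $l$ the strict inequality $w_{l-1}(\zeta) < w_l(\zeta) = w$ holds (using the convention $w_0(\zeta) = 0$). Hence for arbitrarily large $X$ there is an integer polynomial $P$ of degree \emph{exactly} $l$ with $H(P) = X$ and
\[
X^{-w-\epsilon} \leq |P(\zeta)| \leq X^{-w+\epsilon},
\]
the upper estimate coming from $w_l(\zeta) = w$ and the lower one (for $X$ large enough) from the same definition applied to $w + \epsilon$. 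After replacing $P$ by a suitable irreducible factor and invoking Gelfond's inequality, $P$ may be assumed irreducible while preserving the degree $l$ and the exponent $w$ up to $O(\epsilon)$. For each $Y$, the definition of $\widehat w_n(\zeta)$ supplies an integer $Q = Q_Y \in \mathbb{Z}[T]$ of degree $\leq n$, height $\leq Y$, satisfying $|Q(\zeta)| \leq Y^{-\hat w + \epsilon}$.

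The first bound $\hat w \leq n + l - 1$ generalizes the Davenport--Schmidt estimate $\hat w_n \leq 2n - 1$ (the case $l = n$). One uses the family of $n - l + 1$ integer polynomials $P, TP, \ldots, T^{n - l} P$ of degree $\leq n$, height $X$, with $|T^j P(\zeta)| \ll X^{-w + \epsilon}$, together with $Q_Y$ at the scale $Y \asymp X^{w/(n + l - 1)}$. Applying Minkowski's second theorem to the convex body of integer polynomials of degree $\leq n$, height $\leq Y$, and small value at $\zeta$, the product of the $n + 1$ successive minima is controlled, and the $n - l + 1$ translates of $P$ force the bound on $\hat w$ along the lines of \cite{davsh}.

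For the second bound, set $\eta = (w - l + 1)/n$ and $Y = X^\eta$, and dichotomize on whether $P$ divides $Q$ in $\mathbb{Q}[T]$. In the divisibility case, write $Q = P R$ with $R \in \mathbb{Z}[T]$ of degree $\leq n - l$ (Gauss's lemma, as $P$ is primitive); Gelfond's inequality gives $H(R) \leq 2^n Y/X$, while
\[
|R(\zeta)| = \frac{|Q(\zeta)|}{|P(\zeta)|} \leq Y^{-\hat w + \epsilon} X^{w + \epsilon}.
\]
Combining with the lower bound $|R(\zeta)| \geq H(R)^{-v - \epsilon}$ from the definition of $w_{n - l}(\zeta) = v$ (valid for $H(R)$ large), taking logarithms, and sending $\epsilon \to 0$ yields
\[
\hat w \leq v + \frac{w - v}{\eta} = v + \frac{n(w - v)}{w - l + 1},
\]
matching the second term in the minimum. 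In the coprime case, $|\mathrm{Res}(P, Q)| \geq 1$; the factorization $\mathrm{Res}(P, Q) = a_P^{\deg Q} \prod_i Q(\alpha_i)$, where $\alpha_1$ is the closest root of $P$ to $\zeta$, combined with the Mahler-measure estimate $|a_P|^{\deg Q} \prod_{i \geq 2}|Q(\alpha_i)| \ll X^{\deg Q} Y^{l - 1}$ and the expansion $|Q(\alpha_1)| \leq |Q(\zeta)| + C H(Q)|\zeta - \alpha_1|$ with $|\zeta - \alpha_1| \leq (|P(\zeta)|/|a_P|)^{1/l}$, produces a further upper bound on $\hat w$.

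The principal obstacle lies in the coprime subcase of the second bound: a direct resultant argument produces only the intermediate bound $\hat w \leq l - 1 + n^2/(w - l + 1)$, which is subsumed by the first bound $n + l - 1$ when $w \leq n + l - 1$ but requires refinement otherwise to recover the sharper $v + n(w - v)/(w - l + 1)$ uniformly. Possible refinements include passing to the remainder of $Q$ modulo $P$ (an auxiliary polynomial of degree $< l$ that, after clearing denominators, can be fed back into the argument together with the definition of $w_{l - 1}(\zeta) < w$), iterating the construction across multiple scales, or invoking parametric geometry of numbers in the spirit of \cite{ssch}. Secondary technicalities include the Gelfond-type preservation of degree in the irreducibility reduction, the degenerate case $w = \infty$ (in which the second bound becomes $n + w_{n-l}(\zeta)$), and the treatment of the subcase $H(R)$ small in the divisibility analysis.
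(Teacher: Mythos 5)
Your overall framework matches the paper's: fix an irreducible $P\in\mathbb{Z}[T]$ of degree exactly $l$ with $|P(\zeta)|\asymp H(P)^{-w_n(\zeta)}$ (available by the minimality of $l$), compare against polynomials $Q$ of degree $\le n$ and height up to $H(P)^{(w_n(\zeta)-l+1)/n}$, and split according to whether $P$ divides $Q$. Your computation in the divisibility case is correct and reproduces the second term of the minimum. Two remarks on the first bound: the paper gets $\widehat{w}_n(\zeta)\le n+l-1$ directly by applying Theorem~\ref{lutz} with $m=l$; your Minkowski/Davenport--Schmidt sketch is a plausible alternative but much heavier than needed.

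The genuine gap, which you yourself flag, is the coprime case, and here you have misdiagnosed what is required. You worry that a resultant argument ``requires refinement to recover the sharper $v + n(w-v)/(w-l+1)$ uniformly,'' but the coprime case does \emph{not} need to produce that bound; it only needs to produce a bound that is $\le$ it. The paper handles it with Lemma~\ref{kastner} (the Bugeaud--Schleischitz generalization of Davenport--Schmidt's Lemma~8): for coprime $P$, $Q$ of degrees $l$, $\le n$, at least one of $|P(\zeta)|\gg H(P)^{-n+1}H(Q)^{-l}$, $|Q(\zeta)|\gg H(P)^{-n}H(Q)^{-l+1}$ holds. Using this, one shows that if $|Q(\zeta)|<|P(\zeta)|$ then no $Q$ with $H(Q)\le H(P)^{\theta-\delta}$, $\theta=(w_n(\zeta)-l+1)/n$, can exist once $w_n(\zeta)>n+l-1$ (distinguish $H(Q)\le H(P)$, which is killed outright, from $H(Q)>H(P)$, which forces $\log H(Q)/\log H(P)\ge\theta$ up to $O(\epsilon)$). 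So in the coprime case $|Q(\zeta)|\ge|P(\zeta)|$ and the exponent is at most $nw_n(\zeta)/(w_n(\zeta)-l+1)$, which is $\le v + n(w_n(\zeta)-v)/(w_n(\zeta)-l+1)$ exactly when $w_n(\zeta)\ge n+l-1$. Your proposed detours (remainder of $Q$ mod $P$, multi-scale iteration, parametric geometry of numbers) are not what is needed; invoking the right Davenport--Schmidt-type lemma closes the case cleanly. Finally, the paper does track the finitely-many-exceptions issue in the divisibility case (the exceptional cofactors $V$ with $|V(\zeta)|\gg 1$) rather than leaving it as a ``secondary technicality,'' though this is a routine point.
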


\begin{remark} \label{rehmark}
In case of $w_{n}(\zeta)\leq n+l-1$, the trivial estimate in \eqref{eq:wmono} implies \eqref{eq:lhsrhs}.
More generally, when $w_{n}(\zeta)$ does not exceed
$n+l-1$ by much, the Schmidt-Summerer bound \eqref{eq:lsws} is even smaller than both bounds in \eqref{eq:lhsrhs}.
\end{remark}

The left bound in \eqref{eq:lhsrhs} will be an easy consequence of Theorem~\ref{lutz} from \cite{buschl}
reproduced below, the main new contribution is the right bound. When $l\leq \lfloor n/2\rfloor$, by definition of $l$ we have 
$w_{n-l}(\zeta)=w_{n}(\zeta)$ and the right
bound in the minimum in \eqref{eq:lhsrhs} becomes $w_{n}(\zeta)$, which is trivial in view of \eqref{eq:wmono}. Thus
Theorem~\ref{nn} is of interest primarily when $l>n/2$ 
and $w_{n}(\zeta)>n+l-1$.
However, if these relations hold and $w_{n-l}(\zeta)$ does not exceed $n-l$ by much,
then one checks that the right expression in the minimum in \eqref{eq:lhsrhs} is the smaller one.
In general, the new right bound in \eqref{eq:lhsrhs} is 
of interest when 
$l$ is rather close to $n$ and $w_{n-l}(\zeta)$ is relatively small, whereas $w_{n}(\zeta)$ is large.
For $l=n$, the bound in \eqref{eq:lhsrhs} becomes \eqref{eq:kamel},
and we recover Theorem~\ref{bugsch}. 
The expression $w_{n-l}(\zeta)$ involved in \eqref{eq:lhsrhs}
is unpleasant, as it can be arbitrarily close to $w_{n}(\zeta)$. We would like to replace it by $\widehat{w}_{n-l}(\zeta)$,
which could be effectively bounded with \eqref{eq:buschlei} by roughly $2(n-l)$. The proof will suggest
that such improvements are realistic. 

\subsection{$U_{m}$-numbers}
The claim of Theorem~\ref{nn} is of particular interest
when $\zeta$ is a $U_{m}$-number (see Section~\ref{sek1}).
In that case,
in~\cite[Corollary~2.5]{buschl} it was deduced essentially
from the generalization~\cite[Theorem~2.3]{buschl} of
Theorem~\ref{bugsch}
that $\widehat{w}_{m}(\zeta)=m$, and 
moreover
\begin{equation} \label{eq:effeff}
\widehat{w}_{n}(\zeta)\leq n+m-1,  \qquad\qquad n\geq 1.
\end{equation}
We remark that~\cite[Theorem~2.3]{buschl} rephrased in 
Theorem~\ref{lutz} below provides another proof of \eqref{eq:effeff}.
We can now refine this estimate when $n$ is roughly 
between $m$ and $2m$.

\begin{corollary} \label{jawasdenn}
Let $n>m\geq 1$ be integers and $\zeta$ be a $U_{m}$-number. Then
\[
\widehat{w}_{n}(\zeta)\leq n+ \min \left\{m-1, w_{n-m}(\zeta) \right\}.
\]
\end{corollary}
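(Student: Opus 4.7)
The plan is to apply Theorem~\ref{nn} directly to $\zeta$ with the given $n$. Being a $U_m$-number means $w_m(\zeta)=\infty$ while $w_{m-1}(\zeta)<\infty$, and the monotonicity \eqref{eq:wmonos} then forces $w_k(\zeta)=\infty$ for every $k\geq m$. Thus for the $n>m$ of the statement, the smallest index $l\in\{1,\ldots,n\}$ with $w_l(\zeta)=w_n(\zeta)$ is exactly $l=m$.

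Feeding $l=m$ and $w_n(\zeta)=\infty$ into \eqref{eq:lhsrhs}, the left member of the minimum is simply $n+l-1=n+m-1$. A one-line rearrangement rewrites the right member as
\[
\frac{n\,w_n(\zeta)}{w_n(\zeta)-m+1}+w_{n-m}(\zeta)\!\left(1-\frac{n}{w_n(\zeta)-m+1}\right)=n+w_{n-m}(\zeta)+\frac{n\bigl(m-1-w_{n-m}(\zeta)\bigr)}{w_n(\zeta)-m+1},
\]
whose limit as $w_n(\zeta)\to\infty$ equals $n+w_{n-m}(\zeta)$. Combining the two members yields $\widehat{w}_n(\zeta)\leq n+\min\{m-1,\,w_{n-m}(\zeta)\}$, which is the claim. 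The only subtle point is to justify letting $w_n(\zeta)\to\infty$ inside \eqref{eq:lhsrhs}; I would handle this by applying Theorem~\ref{nn} not with the infinite value $w_n(\zeta)$, but with an arbitrary finite $W$ in its place, using that polynomials realizing $|P(\zeta)|\leq H(P)^{-W+\epsilon}$ exist at arbitrarily large heights (which is exactly what $w_n(\zeta)=\infty$ provides, and the only input on that side needed by the proof of Theorem~\ref{nn}), and then taking the infimum of the resulting bounds as $W\to\infty$. No serious obstacle is anticipated here.

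As a consistency check, when $n\geq 2m$ one has $w_{n-m}(\zeta)=\infty$ again by the $U_m$ property together with \eqref{eq:wmonos}, so the minimum collapses to $m-1$ and we merely recover \eqref{eq:effeff}; the corollary genuinely refines \eqref{eq:effeff} only in the range $m<n<2m$, where $w_{n-m}(\zeta)\leq w_{m-1}(\zeta)$ is finite and can be strictly below $m-1$. This matches the comment after Theorem~\ref{nn} that the new right bound in \eqref{eq:lhsrhs} is of interest precisely when $l$ is close to $n$, $w_n(\zeta)$ is very large, and $w_{n-l}(\zeta)$ is relatively small, a situation realised archetypically by $U_m$-numbers with $l=m$.
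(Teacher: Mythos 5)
Your proof is correct and follows essentially the same route as the paper: identify $l=m$ from the $U_m$-property, apply Theorem~\ref{nn}, and observe that the right member of the minimum in \eqref{eq:lhsrhs} degenerates to $n+w_{n-m}(\zeta)$ when $w_n(\zeta)=\infty$. The paper's own proof is just the one-line application of Theorem~\ref{nn} with $l=m$, relying on the remark in the proof of Theorem~\ref{nn} that the case $w_n(\zeta)=\infty$ is treated by the same argument; your extra algebraic rearrangement and the ``take $W\to\infty$'' justification merely make this explicit.
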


\begin{proof}
By assumption we have $w_{m-1}<w_{m}=w_{m+1}=\cdots=w_{n}=\infty$,
where we agree on $w_{0}(\zeta)=0$ if $m=1$.
Thus we may apply Theorem~\ref{nn} with $l=m$, which yields the
claimed bound.
\end{proof}

As indicated, the possible gain by the replacement of $m-1$ by $w_{n-m}(\zeta)$ in the minimum can only
occur when $n$ is not too large compared to $m$. More precisely $n<2m-1$ is a necessary condition by \eqref{eq:wmono}.
On the other hand, when $\zeta$ is a $U_{m}$-number and $w_{l}(\zeta)$ is small for 
some $l<m-1$, then Corollary~\ref{jawasdenn} yields a significant improvement for $n=m+l$. 
It is reasonable that even $U_{m}$-numbers with the property 
$w_{l}(\zeta)=l$ for $1\leq l\leq m-1$ exist. For $m=2$ this is true, using continued
fraction expansion one can even construct a $U_{2}$-number $\zeta$ with any prescribed value $w_{1}(\zeta)\in[1,\infty)$.
See~\cite[Theorem~7.6]{bugbuch} and its preceding remarks. However, for $U_{2}$-numbers we do not get any new
insight from Theorem~\ref{nn}. 
Concerning $U$-numbers of larger index, Alnia\c{c}ik~\cite{aln}
showed the existence of uncountably many $U_{m}$-numbers of arbitrary index $m\geq 2$ with the property $w_{1}(\zeta)=1$.
(In~\cite{alni2} the analogous result 
for $T$-numbers is proposed, 
however as pointed out by
Bugeaud in~\cite[Section~7.10]{bugbuch} crucial
estimates in~\cite{alni2} are not carried out properly
and serious revision of the paper is required.)          
For such $U_{m}$-numbers, the succeeding uniform 
exponent $\widehat{w}_{m+1}(\zeta)$ can
be bounded with Corollary~\ref{jawasdenn} as
\[
\widehat{w}_{m+1}(\zeta)\leq m+2.
\]
For large $m$, this leads to a reasonable improvement compared to the trivial bound $(m+1)+m-1=2m$ from \eqref{eq:effeff}. 
Moreover, Alnia\c{c}iks main theorem in~\cite{aln} seems to allow one to construct $U_{m}$-numbers  
with arbitrary prescribed value $w_{1}(\zeta)=w_{1}\in[1,\infty)$, thus extending the result        
for $U_{2}$-numbers above. Indeed, it suffices to take                                                  
$b_{s_{n}+1}=\nu_{s_{n}+1}=\lfloor q_{s_{n}+1}^{w_{1}-1}\rfloor$                                        
(in fact rather $b_{s_{n}+1}=\nu_{s_{n}+1}=\lfloor q_{s_{n}}^{w_{1}-1}\rfloor$ in the  
classical notation of continued fractions where $q_{n+1}=a_{n+1}q_{n}+q_{n-1}$ for $a_{j}$   
the partial quotients and $p_{n}/q_{n}$ the convergents, this seems to be a minor inaccuracy in~\cite{aln})  
and let the remaining (i.e. $j$ not of the form $s_{n}+1$) $b_{j}=a_{j}$ in the formulation of the theorem. 
Elementary facts on continued fractions and Roth's Theorem imply $w_{1}(\zeta)= w_{1}$.   
Strangely, this observation seems not to have been 
previously mentioned.                          
As soon as $w_{1}(\zeta)=w_{1}<m-1$, the resulting bound                                                 
$\widehat{w}_{m+1}(\zeta)\leq m+w_{1}+1$                                                                 
again improves the trivial upper bound $2m$.                                                             

On the other hand, the larger intermediate exponents $w_{2}(\zeta), w_{3}(\zeta),\ldots, w_{m-1}(\zeta)$ 
are hard to control for a $U_{m}$-number.
A construction of
Schmidt~\cite{schmidl} shows that it is possible to obtain
$w_{l}(\zeta)\leq m+l-1$ simultaneously for $1\leq l\leq m-1$
for some $U_{m}$-number $\zeta$. This refined
an earlier result of Alnia\c{c}ik, Avci and Bugeaud~\cite{alni}.   
However, this estimation is not sufficient to improve the previously known bound $\widehat{w}_{n}(\zeta)\leq m+n-1$
with Corollary~\ref{jawasdenn}.
Finally, we point out that $U_{m}$-numbers which allow arbitrarily good irreducible
polynomial evaluations $\vert P(\zeta)\vert$ of some degree $n>m$
as well satisfy $\widehat{w}_{n}(\zeta)=n$. By the condition
more precisely we mean that the exponent $w$ in \eqref{eq:w} can be chosen arbitrarily large among polynomials of degree $m$ and
additionally among irreducible polynomials $P$ of degree $n>m$.
Indeed, the second expression in the right hand estimate in \eqref{eq:lhsrhs} 
can be dropped in this case, as can be seen from the proof below.

\section{Proof of Theorem~\ref{nn}} \label{proofs}

We reproduce some results from~\cite{buschl} for the proof.
The first is~\cite[Theorem~2.3]{buschl}, which essentially implies the left bound in \eqref{eq:lhsrhs}.

\begin{theorem}[Bugeaud, Schleischitz]  \label{lutz}
Let $m,n$ be positive integers and $\zeta$ be a transcendental real number. Then 
\[
\min \{ w_{m}(\zeta), \widehat{w}_{n}(\zeta) \} \leq m+n-1.
\]
\end{theorem}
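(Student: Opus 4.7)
The plan is to argue by contradiction. Suppose some transcendental $\zeta$ satisfies both $w_{m}(\zeta) > m+n-1$ and $\widehat{w}_{n}(\zeta) > m+n-1$, and fix a real $w$ strictly between $m+n-1$ and the smaller of these two quantities. A preliminary reduction via Gelfond's lemma allows me to restrict attention to \emph{irreducible} witnessing polynomials for $w_{m}(\zeta)\geq w$: if $P\in\mathbb{Z}[T]$ is primitive of degree $\leq m$ with $|P(\zeta)|\leq H(P)^{-w}$ and $P=\prod_{i}P^{(i)}$ is its factorisation into primitive irreducibles, then the Mahler-measure identity $M(P)=\prod_{i}M(P^{(i)})$ combined with the comparison $M(\cdot)\asymp_{m}H(\cdot)$ gives $\log H(P)=\sum_{i}\log H(P^{(i)})+O_{m}(1)$, so averaging in $\log|P(\zeta)|=\sum_{i}\log|P^{(i)}(\zeta)|$ produces an irreducible factor $P^{(i)}$ with $|P^{(i)}(\zeta)|\leq H(P^{(i)})^{-w+o(1)}$ as $H(P)\to\infty$. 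After passing to a subsequence on which the degree $d$ of this factor is constant (and slightly shrinking $w$, still strictly above $m+n-1$), I may work with irreducible $P\in\mathbb{Z}[T]$ of fixed degree $d\leq m$, heights $Y:=H(P)\to\infty$, and $|P(\zeta)|\leq Y^{-w}$.

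The central step is to pair each such $P$ with the uniform-approximation polynomial at parameter $X=Y^{\sigma}$, for a $\sigma$ chosen in the open interval
\[
I=\bigl(\,n/(w-m+1),\;1\,\bigr),
\]
which is nonempty precisely because $w>m+n-1$. For every sufficiently large $Y$, the hypothesis $\widehat{w}_{n}(\zeta)>w$ yields $Q\in\mathbb{Z}[T]$ of degree $\leq n$ with $H(Q)\leq Y^{\sigma}$ and $|Q(\zeta)|\leq Y^{-\sigma w}$. The irreducibility of $P$ combined with Gelfond's lemma now rules out $P\mid Q$: such a divisibility would force $M(Q)\geq M(P)$, hence $H(Q)\geq 2^{-(m+n)}H(P)=2^{-(m+n)}Y$, incompatible with $H(Q)\leq Y^{\sigma}$ for $Y$ large since $\sigma<1$. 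Therefore $P$ and $Q$ are coprime and $\mathrm{Res}(P,Q)$ is a nonzero integer.

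The contradiction then comes from the classical resultant estimate
\[
1\leq|\mathrm{Res}(P,Q)|\ll|P(\zeta)|\,H(P)^{n-1}H(Q)^{m}+|Q(\zeta)|\,H(P)^{n}H(Q)^{m-1},
\]
obtained from Bezout's identity $PA+QB=\mathrm{Res}(P,Q)$ with heights of $A,B$ bounded by Hadamard's inequality on the Sylvester matrix. Inserting the bounds above gives
\[
1\ll Y^{-w+n-1+\sigma m}+Y^{-\sigma w+n+\sigma(m-1)}.
\]
The choice $\sigma>n/(w-m+1)$ makes the second exponent strictly negative, while $\sigma<1<(w-n+1)/m$ (the right inequality being equivalent to $w>m+n-1$) makes the first exponent strictly negative; letting $Y\to\infty$ yields $1\leq 0$, the desired contradiction.

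In my view the main obstacle is not the resultant bound itself, which is classical, but the balance required in choosing the pairing parameter $\sigma$: it must be small enough ($\sigma<1$) for Gelfond's lemma to exclude $P\mid Q$, and large enough ($\sigma>n/(w-m+1)$) for the $|Q(\zeta)|$ contribution to the resultant to decay. These two constraints are jointly satisfiable precisely when $w>m+n-1$, and the interval $I$ collapses to a point as $w\to m+n-1$; this is exactly why the threshold $m+n-1$ appears on the right-hand side of the theorem.
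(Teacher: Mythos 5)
Your proof is correct, and it follows essentially the same route as the proof in \cite{buschl} that the paper relies on: your Bezout/resultant inequality is exactly Lemma~\ref{kastner} in disjunctive form, and your reduction to irreducible witnesses via Gelfond's lemma is the Wirsing argument the paper explicitly invokes. The only stylistic difference is that you phrase the conclusion as a contradiction with a pairing parameter $\sigma\in(n/(w-m+1),1)$ rather than bounding $-\log|Q(\zeta)|/\log X$ directly, which is immaterial.
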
 

Theorem~\ref{lutz} was recently refined~\cite{ichindag} by replacing the right hand side by
$1/\widehat{\lambda}_{m+n-1}(\zeta)$, for $\widehat{\lambda}_{m+n-1}(\zeta)\geq 1/(m+n-1)$ the classic 
exponent of uniform simultaneous rational approximation 
to $(\zeta,\zeta^{2},\ldots,\zeta^{m+n-1})$ 
for a real number $\zeta$. See for example~\cite{bug2010} for 
an exact definition.
We will further directly apply the following~\cite[Lemma~3.1]{buschl}, a generalization of~\cite[Lemma~8]{davsh}
by Davenport and Schmidt, which was the core of the proof 
of both Theorem~\ref{bugsch} and Theorem~\ref{lutz}. We write $A\ll_{.} B$ in the sequel when
$B$ exceeds $A$ at most by a constant that depends on the subscript variables. 

\begin{lemma}[Bugeaud, Schleischitz] \label{kastner}
Let $P, Q$ be two coprime integer polynomials of degree $m$ and $n$, respectively. 
Further let $\zeta$ be any real number. Then
at least one of the estimates
\[
\vert P(\zeta)\vert \gg_{m,n,\zeta} H(P)^{-n+1} H(Q)^{-m}, \qquad \vert Q(\zeta)\vert\gg_{m,n,\zeta} H(P)^{-n} H(Q)^{-m+1}
\]
holds. In particular
\[
\max \{ \vert P(\zeta)\vert, \vert Q(\zeta)\} \gg_{m,n,\zeta} H(P)^{-n+1} H(Q)^{-m+1} \min \{ H(P)^{-1},H(Q)^{-1}\}.
\]
\end{lemma}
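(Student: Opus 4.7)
The plan is to derive the inequality from a suitable B\'ezout identity in $\mathbb{Z}[T]$, which is essentially the approach Davenport and Schmidt used in their Lemma~8~\cite{davsh} in the case $m=n$, generalized here to arbitrary degrees. Since $P, Q$ are coprime in $\mathbb{Q}[T]$, their resultant $R := \mathrm{Res}(P,Q) \in \mathbb{Z}$ is nonzero, so $|R| \geq 1$. Cramer's rule applied to the Sylvester matrix of $P$ and $Q$ supplies polynomials $U, V \in \mathbb{Z}[T]$ with $\deg U \leq n-1$ and $\deg V \leq m-1$ satisfying
\[
U(T)\,P(T) + V(T)\,Q(T) = R.
\]

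Next I would bound the heights of $U$ and $V$ by Hadamard's inequality. In the Sylvester matrix of $P, Q$, each column corresponding to a shift of $P$ has Euclidean norm at most $\sqrt{m+1}\,H(P)$ and each column corresponding to a shift of $Q$ has norm at most $\sqrt{n+1}\,H(Q)$. Each coefficient of $U$ is (up to sign) an $(m+n-1) \times (m+n-1)$ minor involving $n-1$ $P$-columns and $m$ $Q$-columns, giving $H(U) \ll_{m,n} H(P)^{n-1}H(Q)^m$; the symmetric computation yields $H(V) \ll_{m,n} H(P)^n H(Q)^{m-1}$. Combined with the degree bounds on $U, V$ and the fact that $\zeta$ is fixed, these estimates produce $|U(\zeta)| \ll_{m,n,\zeta} H(P)^{n-1}H(Q)^m$ and $|V(\zeta)| \ll_{m,n,\zeta} H(P)^n H(Q)^{m-1}$.

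Evaluating the B\'ezout identity at $\zeta$ and applying the triangle inequality then yields
\[
1 \leq |R| = |U(\zeta)P(\zeta) + V(\zeta)Q(\zeta)| \leq |U(\zeta)|\,|P(\zeta)| + |V(\zeta)|\,|Q(\zeta)|,
\]
so at least one of the two summands on the right exceeds $1/2$. Inserting the bounds on $|U(\zeta)|$ and $|V(\zeta)|$ into each alternative gives precisely the two displayed lower bounds on $|P(\zeta)|$ and $|Q(\zeta)|$. The ``in particular'' statement follows by a short case distinction on whether $H(P) \leq H(Q)$ or $H(P) > H(Q)$: in either case the surviving alternative dominates $H(P)^{-n+1}H(Q)^{-m+1}\min\{H(P)^{-1}, H(Q)^{-1}\}$, using only monotonicity of power functions.

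The main bookkeeping obstacle is the asymmetric height estimate on the Sylvester minors and aligning the resulting exponents on $H(P)$ and $H(Q)$ with the two cases in the conclusion; once that is set up correctly, no further analytic input is required. An alternative route through the resultant factorization $R = a_P^n \prod_i Q(\alpha_i)$, isolating the root of $P$ closest to $\zeta$, would also work, but it forces one to track nearest-root perturbations and is less clean than the B\'ezout approach above.
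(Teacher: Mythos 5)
Your proof is correct and is essentially the argument behind the cited source: the lemma is quoted from \cite{buschl} (Lemma~3.1 there, generalizing \cite[Lemma~8]{davsh}), whose proof likewise rests on the nonvanishing integer resultant of the coprime pair, the B\'ezout/Sylvester cofactor identity $UP+VQ=R$, and Hadamard bounds on the minors, with exactly the asymmetric height estimates you record. One small simplification: no case distinction on $H(P)$ versus $H(Q)$ is needed for the ``in particular'' part, since each alternative already dominates $H(P)^{-n+1}H(Q)^{-m+1}\min\{H(P)^{-1},H(Q)^{-1}\}$ directly.
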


In the formulation of the lemma we dropped the 
condition $\zeta P(\zeta)Q(\zeta)\neq 0$ stated in~\cite{buschl},
which is not required as pointed out to me by D. Roy. 
We further point out that Wirsing~\cite{wirsing} showed that for any $n\geq 1$
there exists a constant $K(n)>1$, such that uniformly for all polynomials $P,Q\in\mathbb{Z}[T]$ 
of degree at most $n$
\begin{equation} \label{eq:wiersing}
K(n)^{-1}H(P)H(Q) \leq H(PQ) \leq K(n)H(P)H(Q) 
\end{equation}
holds. He deduced that the polynomials within the definition of $w_{n}(\zeta)$ can be chosen irreducible.
Moreover it follows from the definition of the exponents that in the case of $w_{n}(\zeta)>w_{n-1}(\zeta)$
these irreducible polynomials have degree precisely $n$. This fact was already an essential ingredient in 
the proof of~\cite[Theorem~2.1]{buschl}, which is 
our Theorem~\ref{bugsch}.

\begin{proof} [Proof of Theorem~\ref{nn}]
First assume $w_{n}(\zeta)\leq l+n-1$. Then by \eqref{eq:wmono} clearly $\widehat{w}_{n}(\zeta)\leq l+n-1$ as well.
Moreover, it is easy to check that the right bound in \eqref{eq:lhsrhs} exceeds $n+l-1$, see also Remark~\ref{rehmark}. Hence we can restrict to $w_{n}(\zeta)>l+n-1$.
We will further assume $w_{n}(\zeta)<\infty$ for simplicity. The case $w_{n}(\zeta)=\infty$
can be treated very similarly by considering the polynomials $P$ for which $-\log \vert P(\zeta)\vert/\log H(P)$
tends to infinity.
 
By the choice of $l$ we have $w_{n}(\zeta)=w_{l}(\zeta)>w_{l-1}(\zeta)$. Hence,
as carried out above, for any $\epsilon>0$ there exist infinitely many
irreducible integer polynomials $P$ of degree 
precisely $l$ such that  
\begin{equation} \label{eq:beidesjo}
H(P)^{-w_{n}(\zeta)-\epsilon} \leq \vert P(\zeta)\vert \leq H(P)^{-w_{n}(\zeta)+\epsilon}.
\end{equation}
Fix one such irreducible $P$ of large height and small $\delta>0$ to be chosen later and let
\begin{equation} \label{eq:theta}
\theta= \frac{w_{n}(\zeta)-l+1}{n}, \qquad X_{\delta}= H(P)^{\theta-\delta}.
\end{equation}
We want to give a lower bound on $\vert Q(\zeta)\vert$ for $Q$ an arbitrary integer polynomial
of degree at most $n$ and height $H(Q)\leq X_{\delta}$. We distinguish two cases.

Case 1: The polynomial $Q$ is not a polynomial multiple of $P$.
Then $P,Q$ are coprime as $P$ is irreducible, and thus we may apply Lemma~\ref{kastner}.
First assume $\vert Q(\zeta)\vert \geq \vert P(\zeta)\vert$.
Then we infer
\begin{equation} \label{eq:portion}
-\frac{\log \vert Q(\zeta)\vert}{\log X_{\delta}}\leq -\frac{\log \vert P(\zeta)\vert}{\log X_{\delta}}
\leq \frac{w_{n}(\zeta)+\epsilon}{\theta-\delta}.
\end{equation}
The upper bound follows for such $Q$ as we may choose $\epsilon$ and $\delta$ arbitrarily small,
and doing so the right hand side in \eqref{eq:portion} tends to $nw_{n}(\zeta)/(w_{n}(\zeta)-l+1)$,
whereas the remaining expression in \eqref{eq:lhsrhs} is non-negative.
Now assume $\vert Q(\zeta)\vert < \vert P(\zeta)\vert$.
Then \eqref{eq:beidesjo} yields
\begin{equation} \label{eq:verallg}
\max \{ \vert P(\zeta)\vert, \vert Q(\zeta)\vert \}= \vert P(\zeta)\vert \leq H(P)^{-w_{n}(\zeta)+\epsilon}.
\end{equation}

First assume $H(Q)\leq H(P)$. Then Lemma~\ref{kastner} yields
\[
\max \{ \vert P(\zeta)\vert, \vert Q(\zeta)\vert \} \gg_{n,\zeta} H(P)^{-l} H(Q)^{-n+1}\geq H(P)^{-n-l+1}.
\]
This contradicts \eqref{eq:verallg} for sufficiently large $H(P)$ and sufficiently small $\epsilon>0$,
by our hypothesis $w_{n}(\zeta)>l+n-1$. Hence $H(Q)>H(P)$ must hold.
Then Lemma~\ref{kastner} implies
\[
\max \{ \vert P(\zeta)\vert, \vert Q(\zeta)\vert \} \gg_{n,\zeta} H(P)^{-l+1}H(Q)^{-n} 
\geq H(Q)^{-\frac{l-1}{\tau}} H(Q)^{-n},
\]
where $\tau= \log H(Q)/ \log H(P)>1$. Combination with \eqref{eq:verallg} yields
\[
\frac{w_{n}(\zeta)}{\tau}-\epsilon\leq 
\frac{w_{n}(\zeta)-\epsilon}{\tau}\leq n+\frac{l-1}{\tau},
\]
hence
\[
\tau \geq \frac{w_{n}(\zeta)-l+1}{n+\epsilon}=\theta\cdot \frac{n}{n+\epsilon}.
\]
This contradicts our assumption $H(Q)\leq X_{\delta}$, which is equivalent to $\tau\leq \theta-\delta$, when $\epsilon$ is 
chosen small enough compared to $\delta$. This contraction finishes the proof of case 1.

Case 2: The integer polynomial $Q$ is
of the form $Q=PV$ for some integer polynomial $V$. The 
degree of $V$ is at most $n-l$ since
$Q$ has degree at most $n$ and $P$ has degree precisely $l$. Moreover from Wirsing's estimate \eqref{eq:wiersing} we infer
\[
H(V)\ll_{n} \frac{H(Q)}{H(P)}\leq \frac{X_{\delta}}{H(P)}= H(P)^{\theta-1-\delta}.
\]
Let $\tilde{\varepsilon}>0$ be small. By definition of $w_{n-l}$, for $\varepsilon>0$ a variation of 
$\tilde{\varepsilon}$ (that tends to $0$ as $\tilde{\varepsilon}$ does) and for sufficiently large $H(P)$, 
all but finitely many $V$ satisfy
\begin{equation}  \label{eq:habsverkaesen}
\vert V(\zeta)\vert \geq H(V)^{-w_{n-l}(\zeta)-\tilde{\varepsilon}}\geq  H(P)^{-w_{n-l}(\zeta)(\theta-1-\delta)-\varepsilon}.
\end{equation}
We briefly discuss the possible exceptions 
$V\in\{V_{1},\ldots,V_{h}\}$ for the given $\tilde{\epsilon}$. 
By the finiteness and transcendence of $\zeta$ we infer an absolute lower bound $\max_{1\leq j\leq h} \vert V_{j}(\zeta)\vert \gg 1$. 
Thus for $V\in\{V_{1},\ldots,V_{h}\}$ we 
have $\vert Q(\zeta)\vert= \vert P(\zeta)\vert \cdot \vert V(\zeta)\vert \gg \vert P(\zeta)\vert$. 
The bound $nw_{n}(\zeta)/(w_{n}(\zeta)-l+1)$ follows similarly to \eqref{eq:portion} as $H(P)\to\infty$
and $\tilde{\epsilon}\to 0$. Now we treat the main case 
of $V$ that satisfy \eqref{eq:habsverkaesen}.
Together with \eqref{eq:beidesjo} and Wirsing's estimate we obtain
\[
\vert Q(\zeta)\vert = \vert P(\zeta)\vert\cdot \vert V(\zeta)\vert 
\geq H(P)^{-w_{n}(\zeta)-w_{n-l}(\zeta)(\theta-1-\delta)-(\epsilon+\varepsilon)}.
\]
We conclude
\[
-\frac{\log \vert Q(\zeta)\vert}{\log X_{\delta}} \leq 
\frac{w_{n}(\zeta)+w_{n-l}(\zeta)(\theta-1-\delta)}{\theta-\delta}+\frac{\epsilon+\varepsilon}{\theta-\delta}.
\]
As we may choose $\delta$ and $\epsilon, \varepsilon$ arbitrarily small, we obtain
\[
-\frac{\log \vert Q(\zeta)\vert}{\log X_{\delta}} \leq  
\frac{w_{n}(\zeta)+w_{n-l}(\zeta)(\theta-1)}{\theta}+\epsilon^{\prime},
\]
for arbitrarily small $\epsilon^{\prime}>0$. 
Inserting the value of $\theta$ from \eqref{eq:theta} we obtain
\[
-\frac{\log \vert Q(\zeta)\vert}{\log X_{\delta}} \leq 
\frac{nw_{n}(\zeta)}{w_{n}(\zeta)-l+1}+w_{n-l}(\zeta)\cdot \frac{w_{n}(\zeta)-n-l+1}{w_{n}(\zeta)-l+1}+\epsilon^{\prime}.
\]
The right bound in \eqref{eq:lhsrhs} follows with elementary rearrangements. 
Since this holds for any polynomial multiple $Q$ of $P$ of height $H(Q)\leq X_{\delta}$, 
the proof of case 2 is finished as well.
\end{proof}

 \vspace{1cm}
 
 The author thanks the anonymous 
 referee for the 
 careful reading.

\end{document}